\DeclareMathOperator{\Ker}{Ker}
\newtheorem{rem}{Remark}[section]{\bfseries}{\itshape}
\newtheorem{defi}{Definition}[section]{\bfseries}{\itshape}
\newtheorem{theo}{Theorem}[section]{\bfseries}{\itshape}
{\bfseries}{\itshape}
\newtheorem{prop}{Proposition}[section]{\bfseries}{\itshape}
{\bfseries}{\itshape}
\newtheorem{cor}{Corollary}[section]{\bfseries}{\itshape}
{\bfseries}{\itshape}
{\bfseries}{\itshape}
{\bfseries}{\itshape}
{\bfseries}{\itshape}
{\bfseries}{\itshape}
{\bfseries}{\itshape}
{\bfseries}{\itshape}
{\bfseries}{\itshape}
\begin{document}

\title{Bounds on Heights of $2$-isogeny Graphs \\
in Ordinary Curves over $\mathbb{F}_p$ and $\mathbb{F}_{p^2}$ 
and Its Application}
\author{Yuji Hashimoto\\ Tokyo Denki University / AIST\\ \texttt{y.hashimoto@mail.dendai.ac.jp} \and Koji Nuida\\ Kyushu University / AIST\\ \texttt{nuida@imi.kyushu-u.ac.jp}}
\date{\today}

\maketitle

\begin{abstract}
    It is known that any 
    isogeny graph consisting of ordinary elliptic curves over $\mathbb{F}_q$ with $q = p$ or $p^2$ has a special structure, called a volcano graph. We have a bound $h < \log_2 \sqrt{4q}$ of a height $h$ of the $2$-volcano graph. 
    In this paper, we improve the bound on a height of $2$-volcano graphs over $\mathbb{F}_q$.
    In case $q = p^2$, we show a tighter bound 
    $h \leq \left\lfloor \frac{ 1 }{ 2 } \lfloor \log_2 p \rfloor \right\rfloor + 2 $. In case $q = p$, we also show that a good bound for each prime $p$ can be computed by using our proposed techniques. 
\end{abstract}

\section{Introduction}
Let $\mathbb{F}_q$ be a finite field with characteristic $p$ and $A$ be an Abelian variety.
An isogeny graph $G\left(A / \mathbb{F}_q \right)$ consists of 
vertices and edges, where each vertex is an isomorphism class of Abelian variety and each edge is an isogeny between two Abelian varieties. 
Structures of isogeny graphs $G\left(A / \mathbb{F}_q \right)$ are
studied for mathematical interest \cite{Sut13, KT20, FS21a, FS21b, BCP22}.  
In particular, isogeny graphs $G\left(E / \mathbb{F}_q \right)$ of elliptic curves $E$ (i.e., Abelian varieties of dimension $1$) have interesting structures.
Isogeny graphs $G\left(E / \mathbb{F}_q \right)$ are 
also applied to isogeny-based cryptography \cite{Cou06, RS06, CLG09, CMPR18, CK20, FMP23}.
Elliptic curves over $\mathbb{F}_q$ are classified into ordinary curves and supersingular curves, and structures of isogeny graphs are 
different depending on whether $E$ is an ordinary curve or supersingular one. 
In detail, ordinary elliptic curves over $\mathbb{F}_{q}$ with $q = p$ or $p^2$ have a graph structure called volcano graph.
On the other hand, supersingular elliptic curves over $\mathbb{F}_{p^2}$ have a graph structure called Ramanujan graph. 
From the perspective of constructing secure isogeny-based cryptosystems, structures of their isogeny graphs $G\left(E / \mathbb{F}_q \right)$ are important.
Thus, structures of isogeny graphs have been studied 
from both mathematical and cryptographic points of view.

In this paper, we focus on ordinary elliptic curves over $\mathbb{F}_q$ with $q = p$ or $q = p^2$.
In particular, we discuss $2$-volcano graph, where each vertex is an isomorphism class of ordinary elliptic curves and each edge is an isogeny with degree $2$.
There is a parameter of $2$-volcano graphs called height. 
The heights of $2$-volcano graphs for ordinary elliptic curves are important from the viewpoint of supersingularity testing, a problem of determining whether a given elliptic curve is ordinary or supersingular.
In detail, in Sutherland's supersingularity testing algorithm \cite{Sut12c} and its improved versions \cite{HT21,HN21}, we search for a terminal vertex in the isogeny graph by drawing a path in the graph, and the curve is ordinary if a terminal vertex is found, while it is supersingular if a terminal vertex is not found.
Now the heights of $2$-volcano graphs give an upper bound for a maximum number of vertices to be searched for finding a terminal vertex in the ordinary case.
Accordingly, a tighter upper bound for the heights of $2$-volcano graphs results in reducing the number of steps in the supersingularity testing algorithm.

\subsection{Our Result}
It is known that the height $h$ of a $2$-volcano graph containing an ordinary elliptic curve defined over $\mathbb{F}_q$ is bounded as $h < \log_2 \sqrt{4q}$  \cite{Sut12c}.
In this paper, we improve this bound.

In case of $q = p^2$, the known bound is $h < \log_2 (2p) = \log_2 p + 1$, therefore $h \leq \lfloor \log_2 p \rfloor + 1$.
We improve this bound to $h \leq \lfloor \frac{1}{2} \lfloor \log_2 p \rfloor \rfloor + 2$.
That is, we reduce the existing bound by about half.

In case of $q = p$, we have a constant bound as follows.
\begin{itemize}
    \item 
    When $p \equiv 3 \pmod{4}$, we have $h \leq 1$.
    \item
    When $p \equiv 5 \pmod{8}$, we have $h \leq 2$.
\end{itemize}
We propose a new technique to computing a bound of a height $h$ for each $p \equiv 1 \pmod{8}$ (hence $p \geq 17$). 
We also experimentally investigate our new bound by using Magma Computational Algebra System.
For example, an average bound for heights for $100$ primes of $1024$-bit length is $258.05$.

Our results can be applied to supersingularity testing algorithms and may be applied algorithm to solving inverse volcano problem.
For the supersingularity testing algorithms in \cite{Sut12c, HT21, HN21}, the maximum number of steps depends on an upper bound for the heights $h$ of $2$-volcano graphs.
By using our result on reducing an upper bound of $h$ by about half, the computational time of the algorithm is also expected to be reduced by about half.
We confirm it by computer experiments.

In addition, our result might be also applicable 
to solving inverse volcano problem \cite{BCP22}.
An inverse volcano problem over $\mathbb{F}_p$ is a problem of finding 
a prime $p$ when the degree $\ell$, the height $h$, and the shape of the surface for the $\ell$-volcano graph over $\mathbb{F}_p$ are specified.
Our new upper bound for the height $h$ could help to search for such a prime $p$.

\section{Elliptic Curves and Isogenies}
In this section, we explain basic points about elliptic curves and isogenies.
For the detail, refer to \cite{S86, Gal12}.
Let $p$ be a prime. 
Let $\mathbb{F}_q$ be a finite field with characteristic $p$ and $\bar{\mathbb{F}}_p$ be an algebraic closure of $\mathbb{F}_p$.

First, we explain definition of elliptic curves.

\begin{defi}\label{EC_def}
For any subfield $\mathbb{F}$ of $\bar{\mathbb{F}}_p$, an elliptic curve defined over $\mathbb{F}$ is a non-singular algebraic curve $E$ with genus one defined over $\mathbb{F}$.
\end{defi}

Specifically, elliptic curves can generally be represented by Weierstrass normal form as follows.
\begin{defi}
For $a_1$, $a_2, \cdots, a_6 \in \mathbb{F}_q$, the following expression of an elliptic curve $E$ is called Weierstrass normal form. 
$$
E : y^2 + a_1 xy + a_3 y = x^3 + a_2 x^2 + a_4 x + a_6 
$$ 
Here, for 
$b_2=a_1^2+4 a_2, b_4=a_1 a_3+2 a_4, b_6=a_3^2+4 a_6, b_8=a_1^2 a_6+4 a_2 a_6-a_1 a_3 a_4+a_2 a_3^2-a_4^2$,
the discriminant $\Delta = -b_2^2 b_8+9 b_2 b_4 b_6-8 b_4^3-27 b_6^2$ of this curve must be non-zero.
\end{defi}

On the other hand, if 
$p \neq 2$, any elliptic curve can be transformed to Legendre form.

\begin{prop}[\text{\cite[Section~3.1]{S86}}]
Every elliptic curve over $\bar{\mathbb{F}}_p$ with $p \neq 2$ is isomorphic to an elliptic curve of Legendre form.
$$
E_{\lambda}: \quad
y^{2}=x(x-1)(x-\lambda)\,\,(\lambda \in \bar{\mathbb{F}}_p,
\lambda\neq 0, 1)
$$
The $j$-invariant of $E_{\lambda}$ is defined by
$$
j(\lambda)=\frac{256\left(\lambda^{2}-\lambda+1\right)^{3}}{\lambda^{2}(\lambda-1)^{2}}.
$$
\end{prop}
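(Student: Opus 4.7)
The plan is to reduce a general Weierstrass equation to Legendre form by a sequence of admissible changes of variables over $\bar{\mathbb{F}}_p$, and then to compute the $j$-invariant directly from the Legendre equation using the standard formulas listed in the preceding definition.

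First, I would start from the Weierstrass normal form of the given elliptic curve $E$. Because $p \neq 2$, the substitution $y \mapsto y - (a_1 x + a_3)/2$ is admissible and eliminates the $xy$ and $y$ terms, reducing the equation to the form $y^{2} = f(x)$ with $f(x) \in \bar{\mathbb{F}}_p[x]$ a monic cubic. The non-vanishing of the discriminant $\Delta$ is equivalent to $f$ having three distinct roots in $\bar{\mathbb{F}}_p$; call them $e_1, e_2, e_3$. Next, I would apply the affine change of variable $x \mapsto (e_2 - e_1)\, x + e_1$, so that the three roots become $0$, $1$, and $\lambda := (e_3 - e_1)/(e_2 - e_1)$. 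Since $e_1, e_2, e_3$ are pairwise distinct, $\lambda \notin \{0,1\}$. This substitution also scales the right-hand side by $(e_2-e_1)^3$; to compensate, I rescale $y \mapsto (e_2 - e_1)^{3/2}\, y$, which is admissible because $\bar{\mathbb{F}}_p$ is algebraically closed and so contains the required square root. The resulting equation is precisely $E_\lambda : y^2 = x(x-1)(x-\lambda)$ with $\lambda \in \bar{\mathbb{F}}_p\setminus\{0,1\}$, and each step above is an isomorphism of elliptic curves over $\bar{\mathbb{F}}_p$.

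For the $j$-invariant, I would expand $E_\lambda$ as a Weierstrass equation with $a_1 = a_3 = 0$, $a_2 = -(1+\lambda)$, $a_4 = \lambda$, $a_6 = 0$, and substitute directly into the formulas for $b_2, b_4, b_6, b_8$ given in the definition. A short computation yields $b_2 = -4(1+\lambda)$, $b_4 = 2\lambda$, $b_6 = 0$, $b_8 = -\lambda^2$, and then
\[
\Delta = -b_2^2 b_8 + 9 b_2 b_4 b_6 - 8 b_4^3 - 27 b_6^2 = 16\,\lambda^2 (\lambda - 1)^2.
\]
Combining these with the standard identity $j = (b_2^2 - 24 b_4)^3 / \Delta$ (equivalently $j = -48 c_4^3/\Delta$ after the usual substitution) leads, after simplification, to
\[
j(\lambda) = \frac{256 (\lambda^2 - \lambda + 1)^3}{\lambda^2(\lambda-1)^2}.
\]

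The main obstacle is not the reduction step, which is standard, but keeping the $j$-invariant computation clean: the expression $\lambda^2(\lambda-1)^2$ in the denominator only emerges after cancelling a factor from $b_2^2 - 24 b_4 = 16(\lambda^2 - \lambda + 1)$ against the $16\,\lambda^2(\lambda-1)^2$ appearing in $\Delta$, so it is worth doing this algebra carefully rather than expanding blindly. Since the statement is directly cited from \cite[Section~3.1]{S86}, a reader comfortable with that reference can also be pointed there for the details.
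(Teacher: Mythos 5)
The paper offers no proof of this proposition---it is quoted directly from the cited reference---and your argument is exactly the standard one found there: complete the square using $p \neq 2$, move two of the three distinct roots of the resulting cubic to $0$ and $1$ by an affine substitution in $x$ together with the compatible rescaling of $y$ (the needed square root existing because $\bar{\mathbb{F}}_p$ is algebraically closed), and then compute $j$ from the quantities $b_2, b_4, b_6, b_8$. Your computations check out: $b_2 = -4(1+\lambda)$, $b_4 = 2\lambda$, $b_6 = 0$, $b_8 = -\lambda^2$, $\Delta = 16\lambda^2(\lambda-1)^2$, $b_2^2 - 24b_4 = 16(\lambda^2-\lambda+1)$, and $j = (b_2^2-24b_4)^3/\Delta$ gives the stated formula. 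The only slip is the parenthetical ``equivalently $j = -48\,c_4^3/\Delta$,'' which is not a correct identity (since $c_4 = b_2^2 - 24b_4$, that would contradict the formula you actually use); the correct relation is simply $j = c_4^3/\Delta$, and since your computation relies on the correct version, the proof stands.
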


Next, we explain definition of isogenies.
\begin{defi}
Two elliptic curves $E, E^{\prime}$ on $\mathbb{F}_q$ are isogenous if there exists a non-constant map $\phi : E\rightarrow E^{\prime}$ such that the following conditions hold.
\begin{enumerate}
\item The $\phi$ is a rational function. That is, for $P=(x, y)\in E$, each coordinate of $\phi(P)\in E^{\prime}$ can be represented by a rational expression of $x, y$.
\item The $\phi$ is homomorphic with respect to addition. That is, for $P, Q\in E$, it satisfies $$\phi(P+Q)=\phi(P)+\phi(Q).$$
Then, $\phi$ is called an isogeny of the elliptic curve $E$.
\end{enumerate}
\end{defi}

An isogeny of degree $\ell$ is defined as follows.

\begin{defi}
For any $\ell$ with $p\mathrel{\not|}\ell$, a separable isogeny $\phi : E\rightarrow E^{\prime}$
is called an $\ell$-isogeny if the kernel $\Ker \phi$ is isomorphic to the cyclic group $\mathbb{Z}/\ell \mathbb{Z}$. Then, $E$ and $E^{\prime}$ are called $\ell$-isogenous.
\end{defi}

\section{Isogeny Volcano Graphs of Ordinary Curves}

In this section, we explain isogeny graphs.
For the details, refer to \cite{Sut13}.

An $\ell$-isogeny graph $G_{\ell}(\mathbb{F}_{q})$ is a graph in which the vertices consist of $\bar{\mathbb{F}}_p$-isomorphism classes (or equivalently, the $j$-invariants) of elliptic curves over $\mathbb{F}_{q}$ and the edges correspond to isogenies of degree $\ell$ defined over $\mathbb{F}_{q}$.

We denote by $G_\ell(E/\mathbb{F}_{q})$ the connected component of $G_{\ell}(\mathbb{F}_{q})$ containing the $j$-invariant $j(E)$ of an elliptic curve $E$ defined over $\mathbb{F}_{q}$. 
We note that the vertex set of a connected component of $G_{\ell}(\mathbb{F}_{q})$ consists of either ordinary curves only or supersingular curves only.
It is known that the connected component $G_{\ell}(E/\mathbb{F}_{q})$ of an isogeny graph at an ordinary elliptic curve $E$ forms an $\ell$-volcano graph of height $h$ for some $h$, defined as follows.

\begin{defi}
[Def.\,1 in \cite{Sut13}, Def.\,1 in \cite{Sut11}]
\label{Dfn:Volcano}
A connected, undirected, and simple graph $V$ is an $\ell$-volcano graph of height $h$ if there exist $h+1$ disjoint subgraphs $V_0, \ldots ,V_h$ (called level graphs) such that any vertex of $V$ belongs to some of $V_0, \ldots, V_h$ and the following conditions hold.
\begin{enumerate}
\item The degree of vertices except for $V_h$ is $\ell +1$ and the degree of vertices in $V_h$ is $1$ when $h>0$ and at most $2$ when $h=0 \,\, ($the degree in this case depends on the form of $V_0)$.
\item The $V_0$ is one of the following; a cycle (of at least three vertices), a single edge (with two vertices), or a single vertex. Moreover, if $h > 0$, then all the other outgoing edges from a vertex in $V_0$ are joined to vertices in $V_1$. $V_0$ is specially called surface.
\item In the case of $h>i >0$, each vertex in the level $i$ graph $V_i$ is adjacent to only one vertex in the level $i - 1$ graph $V_{i-1}$ and all the other outgoing edges are joined to vertices in $V_{i+1}$.
\item If $h > 0$, then each vertex of $V_h$ has only one outgoing edge and it is joined to a vertex in $V_{h-1}$.
\end{enumerate}
\end{defi}

The graph $G_\ell(\mathbb{F}_{q})$ has a connected component of all the $j$-invariants of supersingular curves over $\bar{\mathbb{F}}_p$ \cite{Koh96}. 
Therefore, other connected components consist of $j$-invariants of ordinary curves.
For connected components of ordinary curves, we have 
the following result when a given connected component does not contain a $j$-invariant $0$ nor $1728$.

\begin{prop}
[\cite{Koh96}, Thm.\,7 in \cite{Sut13}]
\label{Thm:Volcano}
Let $V$ be a connected component of $\ell$-volcano graph $G_\ell(\mathbb{F}_q)$ consisting of $j$-invariants of ordinary curves as vertices different from $0,1728$. 
Let ${\cal O}_0$ be an endomorphism ring of an elliptic curve $E$ in the surface of $V$. 
Then the height of $V$ is given by $h = \nu_\ell \left( \left( t^2 - 4q \right)/D_0 \right)/2$, where $D_0 = {\rm disc}({\cal O}_0)$ (see \cite{Sut13} for details about the discriminant $D_0 = {\rm disc}({\cal O}_0)$), $t^2 = {\rm tr}(\pi_E)^2$ where ${\rm tr}(\pi_E)$ is the trace of the $q$-power Frobenius map for $E$, and $\nu_\ell$ is the $\ell$-adic additive valuation.
\end{prop}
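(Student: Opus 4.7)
The plan is to use Deuring's correspondence, which for ordinary $E/\mathbb{F}_q$ identifies $\mathrm{End}(E)$ with an order $\mathcal{O}$ in the imaginary quadratic field $K = \mathbb{Q}(\pi_E)$, sandwiched as $\mathbb{Z}[\pi_E] \subseteq \mathcal{O} \subseteq \mathcal{O}_K$. Since $\pi_E$ satisfies $X^2 - tX + q = 0$, the order $\mathbb{Z}[\pi_E]$ has discriminant $t^2 - 4q$, and for any order $\mathcal{O}' \supseteq \mathbb{Z}[\pi_E]$ inside $\mathcal{O}_K$, the general theory of orders in quadratic fields gives $t^2 - 4q = [\mathcal{O}' : \mathbb{Z}[\pi_E]]^2 \cdot \mathrm{disc}(\mathcal{O}')$. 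Applying this to $\mathcal{O}' = \mathcal{O}_0$ yields
\[
[\mathcal{O}_0 : \mathbb{Z}[\pi_E]] \;=\; \sqrt{(t^2 - 4q)/D_0},
\]
so it suffices to show that the height $h$ equals the $\ell$-adic valuation of this index.

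Next I would invoke Kohel's classification of $\ell$-isogenies between ordinary curves: for an $\ell$-isogeny $\phi : E \to E'$, the orders $\mathrm{End}(E)$ and $\mathrm{End}(E')$ are related in exactly one of three ways — either they coincide (\emph{horizontal}), or one is contained in the other with index $\ell$ (\emph{ascending} or \emph{descending}) — and the number of each type of edge out of a vertex is determined by how the prime $\ell$ splits in $\mathrm{End}(E)$. This gives each vertex at level $i > 0$ exactly one ascending neighbor (at level $i-1$) and $\ell$ descending neighbors (at level $i+1$), matching the volcano structure in Definition \ref{Dfn:Volcano}. The key input here, and the step I expect to be the main obstacle, is to show that the surface $V_0$ corresponds exactly to curves whose endomorphism ring is $\mathcal{O}_0$ and that a vertex at level $i$ has endomorphism ring whose conductor at $\ell$ is $\ell^i$ times that of $\mathcal{O}_0$. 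This requires the theory of CM lifting together with the observation that the surface consists of curves whose $\ell$-adic conductor cannot be reduced further (no ascending edges), i.e., orders $\mathcal{O}$ with $\nu_\ell(\mathrm{cond}(\mathcal{O})) = \nu_\ell(\mathrm{cond}(\mathcal{O}_0))$, while the floor consists of orders $\mathcal{O}$ with $\nu_\ell(\mathrm{cond}(\mathcal{O})) = \nu_\ell(\mathrm{cond}(\mathbb{Z}[\pi_E]))$ (since further descent would lead outside $\mathbb{Z}[\pi_E]$, contradicting $\pi_E \in \mathrm{End}(E)$).

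Combining these two ingredients, the height equals the number of times one must multiply the $\ell$-part of the conductor of $\mathcal{O}_0$ by $\ell$ to reach that of $\mathbb{Z}[\pi_E]$, which is exactly $\nu_\ell\!\left([\mathcal{O}_0 : \mathbb{Z}[\pi_E]]\right)$. Substituting the discriminant identity above gives
\[
h \;=\; \nu_\ell\!\left(\sqrt{(t^2 - 4q)/D_0}\right) \;=\; \tfrac{1}{2}\,\nu_\ell\!\left((t^2 - 4q)/D_0\right),
\]
as claimed. The restriction $j \neq 0, 1728$ is used implicitly to avoid the exceptional automorphism groups that would obstruct the clean correspondence between $\mathcal{O}$-ideal classes and vertices; for such $j$ the identification $\mathrm{End}(E) \simeq \mathcal{O}$ is faithful and $\mathcal{O}_0$ is unambiguously determined by the surface.
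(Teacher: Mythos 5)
The paper offers no proof of this proposition: it is imported verbatim from Kohel's thesis and Sutherland's survey (Theorem~7 of \cite{Sut13}), so there is nothing internal to compare your argument against. Your sketch follows exactly the route those sources take --- Deuring's correspondence $\mathbb{Z}[\pi_E] \subseteq \mathrm{End}(E) \subseteq \mathcal{O}_K$, the discriminant--conductor identity $t^2 - 4q = [\mathcal{O}_0 : \mathbb{Z}[\pi_E]]^2 D_0$, and Kohel's trichotomy of horizontal/ascending/descending $\ell$-isogenies --- and the final computation $h = \nu_\ell([\mathcal{O}_0 : \mathbb{Z}[\pi_E]]) = \tfrac{1}{2}\nu_\ell((t^2-4q)/D_0)$ is correct. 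Be aware, however, that the step you flag as ``the main obstacle'' (that the surface consists precisely of the curves with endomorphism ring $\mathcal{O}_0$, that level $i$ corresponds to $\ell$-conductor $\ell^i \cdot \mathrm{cond}(\mathcal{O}_0)$, and that the floor is pinned down by $\pi_E \in \mathrm{End}(E)$) is not a technical detail but the entire content of Kohel's theorem; as written your argument asserts it rather than proves it, so what you have is a faithful outline of the standard proof rather than a self-contained one. One small refinement: the surface is characterized by $\mathrm{End}(E)$ being maximal at $\ell$ (i.e.\ $\ell \nmid [\mathcal{O}_K : \mathcal{O}_0]$), which is slightly stronger than the ``no ascending edges'' phrasing you use, though the two coincide once Kohel's classification is in place.
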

We have the following corollary of the above Proposition \ref{Thm:Volcano} and Remark 8 in \cite{Sut13} which discusses on cases of $j(E)=0,1728$.

\begin{cor}[\cite{Sut13}]
\label{Cor:DepthVolcano}
For any connected component of $G_\ell(\mathbb{F}_{q})$ consisting of ordinary curves, its height $h$ satisfies that $h \leq \log_\ell(\sqrt{4q})$.
\end{cor}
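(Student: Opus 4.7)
The plan is to read off the corollary essentially as a divisibility estimate from Proposition~\ref{Thm:Volcano}, then patch the two exceptional $j$-invariants using the cited Remark~8 in \cite{Sut13}.

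First, I would restrict to a connected component $V$ none of whose vertices are $j=0$ or $j=1728$, so that Proposition~\ref{Thm:Volcano} applies directly. The formula
\[
h \;=\; \tfrac{1}{2}\,\nu_\ell\!\left(\frac{t^2 - 4q}{D_0}\right)
\]
implies in particular that the integer $(t^2-4q)/D_0$ is nonzero and divisible by $\ell^{2h}$. Hence
\[
\ell^{2h} \;\leq\; \left|\frac{t^2-4q}{D_0}\right|.
\]
Next I would bound the right-hand side using two standard facts: (i) since $E$ is an ordinary curve over $\mathbb{F}_q$, the Frobenius trace satisfies $t^2 < 4q$ by the Hasse bound, so $|t^2-4q| = 4q - t^2 \leq 4q$; and (ii) $\mathcal{O}_0$ is an order in an imaginary quadratic field, so its discriminant $D_0$ is a negative integer with $|D_0|\geq 1$ (in fact $|D_0|\geq 3$, giving a slightly sharper bound, but $1$ suffices for the stated corollary). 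Combining these yields $\ell^{2h} \leq 4q$, and taking $\log_\ell$ of both sides gives exactly $h \leq \log_\ell \sqrt{4q}$.

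Finally, I would handle the case in which the component $V$ contains a vertex with $j$-invariant $0$ or $1728$. These vertices are exceptional because the usual identification of the endomorphism ring with an order in $\mathbb{Q}(\pi_E)$ may fail or require an adjustment. Here I would invoke Remark~8 in \cite{Sut13}, which explicitly describes how Proposition~\ref{Thm:Volcano} is modified for these $j$-invariants, and note that the resulting formula for $h$ is still of the shape $h = \tfrac{1}{2}\nu_\ell(N)$ for some nonzero integer $N$ with $|N| \leq 4q$, so the same inequality $\ell^{2h} \leq 4q$ is preserved.

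The proof is essentially a one-line divisibility argument, so there is no real obstacle; the only subtlety is the exceptional $j=0,1728$ case, which is entirely outsourced to the cited remark in \cite{Sut13}. Thus the bulk of the write-up will simply be the inequality chain above together with a short sentence addressing the two distinguished $j$-invariants.
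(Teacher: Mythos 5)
Your argument is correct and matches the route the paper itself takes: the paper derives exactly the same chain $h=\tfrac12\nu_\ell((t^2-4q)/D_0)\leq\tfrac12\nu_\ell(t^2-4q)\leq\tfrac12\log_\ell(4q)$ from Proposition~\ref{Thm:Volcano} together with the Hasse bound, and likewise delegates the $j=0,1728$ vertices to Remark~8 of \cite{Sut13}. No issues.
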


\section{Our Results: Heights of $2$-isogeny Graphs in Ordinary Carves}

For any ordinary elliptic curve $E$ defined over a finite field $\mathbb{F}_q$, as shown in Proposition \ref{Thm:Volcano}, the height $h = h(\ell;E/\mathbb{F}_q)$ of the $\ell$-volcano graph containing $E$ satisfies
\[
    h = \frac{ 1 }{ 2 } \nu_{\ell}( (t^2 - 4q) / D_0 )
\]
where $\nu_{\ell}$ denotes the $\ell$-adic additive valuation, $t \in \mathbb{Z}$ is the trace of $E$, and $D_0 \in \mathbb{Z}_{>0}$ is the discriminant of the field $\mathbb{Q}(\sqrt{t^2 - 4q})$.
By the fact above, we have a bound
\[
    h \leq \frac{ e(q;t) }{ 2 } \,,\, \mbox{ where } e(q;t) := \nu_{\ell}( t^2 - 4q ) \enspace.
\]
Moreover, the Hasse bound and the characterization of supersingular elliptic curves in terms of the trace $t$ imply that $|t| \leq 2 \sqrt{q}$ and $t \not\equiv 0 \pmod{p}$.

By the observation above, an upper bound for the height $h$ will be derived once we obtain an upper bound for the value $e(q;t)$. 
In the following, we give a bound for $e(q;t)$ when $\ell = 2$, $q \in \{p,p^2\}$ with odd prime $p$, and $t$ runs over all integers satisfying that $|t| \leq 2 \sqrt{q}$ and $t \not\equiv 0 \pmod{p}$.

\subsection{Our Bound of Height in $q = p^2$}

We consider the case $q = p^2$.
We give a bound for $e(p^2;t) = \nu_2( t^2 - 4p^2 )$ when $t$ runs over all integers satisfying that $|t| \leq 2 \sqrt{q} = 2p$ and $t \not\equiv 0 \pmod{p}$; in particular, $1 \leq |t| < 2p$.

\begin{theo}
\label{thm:bound:Fp-square}
In the current case (with $\ell = 2$ and $q = p^2$), we have
\[
    e(p^2;t) \leq \lfloor \log_2 p \rfloor + 4 \enspace.
\]
Hence the height $h = h(2;E/\mathbb{F}_{p^2})$ of the $2$-volcano graph with vertices defined over $\mathbb{F}_{p^2}$ is bounded by
\[
    h \leq h_2 \,,\, \mbox{ where }
    h_2 := \left\lfloor \frac{ 1 }{ 2 } \lfloor \log_2 p \rfloor \right\rfloor + 2 \enspace.
\]
\end{theo}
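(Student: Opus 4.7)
The plan is to bound $e(p^2;t) = \nu_2(t^2 - 4p^2)$ by exploiting the factorization
\[
    t^2 - 4p^2 = -(2p - t)(2p + t) \enspace,
\]
and then transfer the bound on $e$ to one on $h$ via Proposition~\ref{Thm:Volcano}. Set $a := 2p - t$ and $b := 2p + t$; both are positive integers since $|t| < 2p$, and they satisfy $a + b = 4p$. Because $p$ is an odd prime, $\nu_2(a + b) = \nu_2(4p) = 2$, and I will use this rigid constraint together with the standard facts that $\nu_2(a+b) = \min\{\nu_2(a),\nu_2(b)\}$ when $\nu_2(a) \neq \nu_2(b)$ and $\nu_2(a+b) > \nu_2(a)$ when $\nu_2(a) = \nu_2(b)$.

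First I would split into two cases. If $\nu_2(a) = \nu_2(b)$, the strict-inequality case forces $\nu_2(a) = \nu_2(b) \leq 1$, so
\[
    e(p^2;t) = \nu_2(a) + \nu_2(b) \leq 2 \enspace,
\]
which is well within the target. Otherwise, without loss of generality, $\nu_2(b) < \nu_2(a)$, and then the identity $\nu_2(a+b) = 2$ forces $\nu_2(b) = 2$ and $\nu_2(a) \geq 3$, so $e(p^2;t) = 2 + \nu_2(a)$. The key inequality is then $\nu_2(a) \leq \lfloor \log_2 p \rfloor + 2$: since $a \geq 2^{\nu_2(a)}$ and $a < a + b = 4p$, we obtain $2^{\nu_2(a)} < 4p$, i.e.\ $\nu_2(a) < 2 + \log_2 p$. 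Because $p$ is an odd prime, $\log_2 p \notin \mathbb{Z}$, so this strict inequality among integers on the left gives $\nu_2(a) \leq \lfloor \log_2 p \rfloor + 2$. Combining the two cases yields $e(p^2;t) \leq \lfloor \log_2 p \rfloor + 4$, which is the first claim of the theorem.

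For the bound on $h$, I would invoke Proposition~\ref{Thm:Volcano}: $h = \tfrac{1}{2}\,\nu_2((t^2 - 4p^2)/D_0)$, and since $D_0$ is a positive integer dividing $t^2 - 4p^2$, we have $\nu_2((t^2 - 4p^2)/D_0) \leq \nu_2(t^2 - 4p^2) = e(p^2;t) \leq \lfloor \log_2 p \rfloor + 4$. Because $h$ is a non-negative integer, dividing by $2$ and flooring gives
\[
    h \leq \left\lfloor \frac{\lfloor \log_2 p \rfloor + 4}{2} \right\rfloor = \left\lfloor \frac{1}{2}\lfloor \log_2 p \rfloor \right\rfloor + 2 = h_2 \enspace,
\]
exactly as claimed. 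The cases $j(E) \in \{0, 1728\}$ are covered by the corresponding corollary (via Remark~8 of \cite{Sut13}) and do not require a separate argument here.

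I do not expect a serious obstacle; the proof is essentially a disciplined accounting of $2$-adic valuations governed by the single identity $(2p - t) + (2p + t) = 4p$. The one point that deserves care is the sharpening from the real inequality $\nu_2(a) < 2 + \log_2 p$ to the integer bound $\nu_2(a) \leq 2 + \lfloor \log_2 p \rfloor$, which relies on $p$ being odd and hence $\log_2 p$ being irrational; any slip there would lose exactly the factor of about one half that the theorem is designed to gain over the previous bound $\lfloor \log_2 p \rfloor + 1$.
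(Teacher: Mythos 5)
Your proof is correct and takes essentially the same route as the paper's: both rest on the difference-of-squares factorization of $4p^2 - t^2$ together with the observation that the two factors sum to a quantity of small $2$-adic valuation, so only one factor can carry a large power of $2$, and that factor is bounded in size by (roughly) $4p$. The only cosmetic difference is that you work directly with $2p \pm t$ (sum $4p$, valuation $2$), whereas the paper first extracts $t = 2t_0$ and works with $p \pm t_0$ (sum $2p$, valuation $1$); the resulting inequality $e < \log_2 p + 4$ and the integrality step are identical.
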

\begin{proof}
The bound for $e := e(p^2;t)$ is satisfied when $e \leq 1$; so we suppose that $e \geq 2$.
By the definition of $e$ and the condition $1 \leq |t| < 2p$, we can write $t^2 - 4p^2 = -2^e a$ with some odd integer $a > 0$.
Then we have $t^2 = 4p^2 - 2^e a = 4 (p^2 - 2^{e-2} a) \in 4 \mathbb{Z}$, therefore $t \in 2\mathbb{Z}$.
Write $t = 2t_0$ with $t_0 \in \mathbb{Z}$.
Then $1 \leq |t_0| < p$, $t_0{}^2 = p^2 - 2^{e-2} a$, and
\[
    2^{e-2} a = p^2 - t_0{}^2
    = (p - t_0)(p + t_0) \enspace,
\]
therefore
\[
    \nu_2( p - t_0 ) + \nu_2( p + t_0 )
    = \nu_2( (p - t_0)(p + t_0) )
    = \nu_2( 2^{e-2} a )
    = e - 2 \enspace.
\]
Now, since $(p - t_0) + (p + t_0) = 2p$ and $\nu_2(2p) = 1$ (recall that $p$ is odd, since $\ell = 2$ and $p\mathrel{\not|}\ell$), we have either $\nu_2(p - t_0) \leq 1$ or $\nu_2(p + t_0) \leq 1$.
Therefore $\nu_2(p + t_0) \geq e - 3$ or $\nu_2(p - t_0) \geq e - 3$.
Hence we can write $p \pm t_0 = 2^{e-3} b$ with some sign $\pm$ and some integer $b > 0$.
Now we have
\[
2^{e-3} \leq |2^{e-3} b| = |p \pm t_0| \leq p + |t_0| < 2p \enspace,
\]
therefore $e - 3 < \log_2 p + 1$ and $e < \log_2 p + 4$.
Hence the bound for $e$ holds since $e \in \mathbb{Z}$.
Then the bound for $h$ also holds since $h \in \mathbb{Z}$ as well.
\end{proof}

\begin{rem}
\label{rem:bound-Fp-square-is-tight}
When $t$ runs over the range mentioned above, the bound for $e(p^2;t)$ in Theorem \ref{thm:bound:Fp-square} is tight.
Indeed, put $f := \lfloor \log_2 p \rfloor \geq 1$ and set $t := 2(2^{f+1} - p)$.
Then $2^f \leq p < 2^{f+1}$ and hence
\[
    0 < t \leq 2 (2^{f+1} - 2^f) 
    = 2 \cdot 2^f \leq 2p \enspace,
\]
while $t \equiv 2^{f+2} \not\equiv 0 \pmod{p}$ since $p$ is odd.
Moreover, 
\[
    t^2 - 4p^2 = 4 ( (2^{f+1} - p)^2 - p^2)
    = 4 ( 2^{2f+2} - 2^{f+2} p )
    = 2^{f+4} ( 2^f - p )
\]
and $2^f - p \equiv 1 \pmod{2}$ since $f \geq 1$ and $p \equiv 1 \pmod{2}$.
Hence $\nu_2( t^2 - 4p^2 ) = f + 4 = \lfloor \log_2 p \rfloor + 4$.
\end{rem}

\subsection{Our Bound of Height in $q = p$}

We consider the case $q = p$.
We give a bound for $e(p;t) = \nu_2( t^2 - 4p )$ when $t$ runs over all integers satisfying that $|t| \leq 2 \sqrt{q} = 2 \sqrt{p}$ and $t \not\equiv 0 \pmod{p}$.
Note that when $p \geq 5$, we have $2 \sqrt{p} < p$ and hence $t$ runs over all integers with $1 \leq |t| < 2 \sqrt{p}$.

\begin{theo}
\label{thm:bound:Fp}
In the current case (with $\ell = 2$ and $q = p$), put $h := h(2;E/\mathbb{F}_p)$.
Then: 
\begin{itemize}
    \item 
    When $p \equiv 3 \pmod{4}$, we have $e(p;t) \leq 3$ and $h \leq 1$.
    \item
    When $p \equiv 5 \pmod{8}$, we have $e(p;t) \leq 4$ and $h \leq 2$.
\end{itemize}
On the other hand, when $p \equiv 1 \pmod{8}$ (hence $p \geq 17$), put $\mu := \lceil (1/2)\log_2 p \rceil \geq 3$.
We consider the following algorithm with input $p$:
\begin{enumerate}
    \item 
    $a_3 \leftarrow 1$
    \item 
    Repeat the following for $j = 4,5,\dots,\mu+1$:
    \[
        a_j \leftarrow 
        \begin{cases}
            a_{j-1} & \mbox{(if $a_{j-1}{}^2 \equiv p \pmod{2^j}$)} \\
            2^{j-2} - a_{j-1} & \mbox{(if $a_{j-1}{}^2 \not\equiv p \pmod{2^j}$)}
        \end{cases}
    \]
    \item
    \begin{itemize}
        \item 
        If $(2^{\mu} - a_{\mu+1})^2 < p$ and $\nu_2( p - a_{\mu+1}{}^2 ) < \nu_2( p - (2^{\mu} - a_{\mu+1})^2 )$, then set $b_p \leftarrow 2^{\mu} - a_{\mu+1}$
        \item
        Otherwise, set $b_p \leftarrow a_{\mu+1}$
    \end{itemize} 
    \item 
    Output $b_p$
\end{enumerate}
Then we have the following:
\begin{itemize}
    \item 
    In the algorithm, for each $j = 3,4,\dots,\mu+1$, we have $0 < a_j < 2^{j-2}$, $a_j \equiv 1 \pmod{2}$, and $p \equiv a_j{}^2 \pmod{2^j}$.
    \item 
    We have $1 \leq |2b_p| < 2 \sqrt{p}$, $e(p;2b_p) \geq \mu + 3$, and $e(p;t) \leq e(p;2b_p)$.
    Hence the maximum value $e_{\max}$ of $e(p;t)$ among all choices of $t$ is
    \[
        e_{\max} = e(p;2b_p)
        \geq \mu + 3 \enspace,
    \]
    therefore we have
    \[
        h \leq h_1 \,,\, \mbox{ where }
        h_1 := \left\lfloor \frac{ e(p;2b_p) }{ 2 } \right\rfloor
        = \left\lfloor \frac{ \nu_2( p - b_p{}^2 ) }{ 2 } \right\rfloor + 1
        \geq \left\lfloor \frac{ \mu + 3 }{ 2 } \right\rfloor
        = \left\lfloor \frac{ \lceil (1/2) \log_2 p \rceil + 3 }{ 2 } \right\rfloor \enspace.
    \]
\end{itemize}
\end{theo}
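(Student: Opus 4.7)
The plan is to split on $p \bmod 8$ after a single universal reduction. A calculation modulo $8$ shows that $t$ odd forces $t^2 - 4p \equiv 5 \pmod 8$, hence $e(p;t) = 0$; so every nontrivial value of $e$ comes from $t = 2t_0$ with integer $t_0$, $1 \le |t_0| < \sqrt p$, and satisfies $e(p;t) = 2 + \nu_2(p - t_0{}^2)$. The whole task reduces to bounding $\nu_2(p - t_0{}^2)$.

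For $p \equiv 3 \pmod 4$, a case split on the parity of $t_0$ yields $p - t_0{}^2 \equiv 2 \pmod 4$ when $t_0$ is odd and $\equiv 3 \pmod 4$ when $t_0$ is even, so $\nu_2 \le 1$ and hence $e \le 3$, $h \le 1$. For $p \equiv 5 \pmod 8$ the same analysis one bit further (working modulo $8$) yields $\nu_2 \le 2$, $e \le 4$, and $h \le 2$.

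The substantive case is $p \equiv 1 \pmod 8$, where the algorithm is a Hensel lift of a square root of $p$ modulo $2^{\mu+1}$. I would first prove the invariants on $(a_j)$ by induction on $j$, with base case $j = 3$ handled by $a_3 = 1$ (valid since $p \equiv 1 \pmod 8$). The inductive step is trivial when $a_{j-1}{}^2 \equiv p \pmod{2^j}$; otherwise the hypothesis $a_{j-1}{}^2 \equiv p \pmod{2^{j-1}}$ combined with the failure forces $a_{j-1}{}^2 - p \equiv 2^{j-1} \pmod{2^j}$, and the computation
\[
    (2^{j-2} - a_{j-1})^2 = 2^{2j-4} - 2^{j-1} a_{j-1} + a_{j-1}{}^2 \equiv a_{j-1}{}^2 - 2^{j-1} \pmod{2^j}
\]
(using $2j - 4 \ge j$ for $j \ge 4$ and the oddness of $a_{j-1}$) supplies $a_j{}^2 \equiv p \pmod{2^j}$; oddness and the bound $0 < a_j < 2^{j-2}$ transfer directly from $j-1$ to $j$.

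The main obstacle is verifying that $b_p$ actually maximizes $\nu_2(p - t_0{}^2)$ over valid $t_0$. My approach: if $\nu_2(p - t_0{}^2) \ge \mu + 1$, then $t_0$ reduces to a square root of $p$ modulo $2^{\mu+1}$, and because $p \equiv 1 \pmod 8$ the group $(\mathbb Z/2^{\mu+1})^\times$ admits exactly four such roots, namely the classes of $\pm a_{\mu+1}$ and $\pm(2^{\mu} - a_{\mu+1})$. The definition $\mu = \lceil \tfrac12 \log_2 p \rceil$ gives $2^{\mu-1} < \sqrt p < 2^{\mu}$ (both strict because $p$ is an odd prime, so not a power of $2$), so the only class representatives with $|t_0| < \sqrt p$ are $\pm a_{\mu+1}$ (always, since $a_{\mu+1} < 2^{\mu-1}$) and $\pm(2^{\mu} - a_{\mu+1})$ (exactly when $(2^{\mu} - a_{\mu+1})^2 < p$). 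The algorithm's final comparison of the two candidates' $2$-adic valuations is precisely the selection needed to maximize, so $b_p$ realizes the maximum value of $\nu_2(p - t_0{}^2)$. The claims $|2b_p| < 2\sqrt p$ and $e(p;2b_p) \ge \mu + 3$ are then immediate from the construction and the invariant at $j = \mu + 1$, and the identity $h_1 = \lfloor e(p;2b_p)/2\rfloor = 1 + \lfloor \nu_2(p - b_p{}^2)/2\rfloor$ follows by splitting off the leading $2$ in $e = 2 + \nu_2(p - b_p{}^2)$.
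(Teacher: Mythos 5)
Your proposal is correct, and the reduction to even $t$, the mod-$4$ and mod-$8$ computations for $p \equiv 3 \pmod 4$ and $p \equiv 5 \pmod 8$, and the induction establishing the invariants of $(a_j)$ all match the paper's proof essentially step for step. Where you genuinely diverge is the central step for $p \equiv 1 \pmod 8$: showing that $b_p$ maximizes $\nu_2(p - t_0{}^2)$. You invoke the structure of $(\mathbb{Z}/2^{\mu+1}\mathbb{Z})^\times$ to assert that $p$ has exactly four square roots modulo $2^{\mu+1}$, namely the classes of $\pm a_{\mu+1}$ and $\pm(2^{\mu} - a_{\mu+1})$, and then observe that the window $|t_0| < \sqrt{p} < 2^{\mu}$ (which is half the modulus) admits at most one representative of each class, so the algorithm's final comparison ranges over exactly the possible maximizers. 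The paper instead proceeds without quoting that classification: it splits on whether $e'(p;a_{\mu+1})$ is already maximal, and if not, takes a maximizer $c$, proves $a_{\mu+1}{}^2 \not\equiv p \pmod{2^{\mu+2}}$ by a contradiction argument on valuations of $c \pm a_{\mu+1}$, pins down $\nu_2(c^2 - a_{\mu+1}{}^2) = \mu+1$ exactly, and solves for $c$ to conclude $|c| = 2^{\mu} - a_{\mu+1}$ --- in effect re-deriving by hand, in the one case needed, the same four-root structure you cite. Your route is shorter and more conceptual at the price of an external (though standard) fact about square roots of units modulo $2^k$ for $k \geq 3$; the paper's is longer but entirely self-contained and elementary. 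Both correctly use that $2^{\mu-1} < \sqrt{p} < 2^{\mu}$ (strict since $\log_2 p$ is not an integer for an odd prime), and your final bookkeeping for $e(p;2b_p) \geq \mu+3$ and $h_1$ agrees with the paper's.
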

\begin{proof}
First of all, if $t$ is odd, then we have $t^2 - 4p \equiv 1 \pmod{2}$ and $e(p;t) = \nu_2(t^2 - 4p) = 0$.
Hence it suffices to consider the case $t \in 2\mathbb{Z}$ to derive a bound for $e(p;t)$.
Write $t = 2t_0$, $t_0 \in \mathbb{Z}$.
Note that $1 \leq |t_0| = |t| / 2 < \sqrt{p}$.
Now we have $t^2 - 4p = 4t_0{}^2 - 4p = 4 (t_0{}^2 - p)$ and $e(p;t) = 2 + e'(p;t_0)$ where
\[
    e'(p;t_0) := \nu_2( t_0{}^2 - p ) \enspace.
\]

When $p \equiv 3 \pmod{4}$, we have $t_0{}^2 \bmod 4 \in \{0,1\}$ and $t_0{}^2 - p \not\equiv 0 \pmod{4}$, therefore $e'(p;t_0) \leq 1$, $e(p;t) \leq 3$, and $h \leq \lfloor e(p;t)/2 \rfloor \leq 1$.
When $p \equiv 5 \pmod{8}$, we have $t_0{}^2 \bmod 8 \in \{0,1,4\}$ and $t_0{}^2 - p \not\equiv 0 \pmod{8}$, therefore $e'(p;t_0) \leq 2$, $e(p;t) \leq 4$, and $h \leq \lfloor e(p;t)/2 \rfloor \leq 2$.
Hence the claim holds for these cases.
From now on, we consider the remaining case $p \equiv 1 \pmod{8}$.
Note that the bound for $h$ in the claim will follow from the other parts of the claim.
Let $e'_{\max}$ denote the maximum value of $e'(p;t_0)$ among all choices of $t_0$.

First, we note that $0 < a_3 = 1 < 2^{3-2}$, $a_3 \equiv 1 \pmod{2}$, and $p \equiv 1 = a_3{}^2 \pmod{2^3}$, therefore the first claim holds for the case of $j = 3$.
Now suppose that $4 \leq j \leq \mu + 1$ and the first claim holds for the case of $j-1$.
If $a_{j-1}{}^2 \equiv p \pmod{2^j}$, then we have $0 < a_j = a_{j-1} < 2^{j-3} < 2^{j-2}$, $a_j = a_{j-1} \equiv 1 \pmod{2}$, and $a_j{}^2 = a_{j-1}{}^2 \equiv p \pmod{2^j}$.
On the other hand, if $a_{j-1}{}^2 \not\equiv p \pmod{2^j}$, then
\[
    p - a_{j-1}{}^2 \equiv 0 \pmod{2^{j-1}}
    \quad \mbox{and} \quad
    p - a_{j-1}{}^2 \not\equiv 0 \pmod{2^j} \enspace,
\]
therefore $p - a_{j-1}{}^2 \equiv 2^{j-1} \pmod{2^j}$.
Now we have
\[
    0 < 2^{j-2} - 2^{j-3}
    < 2^{j-2} - a_{j-1}
    = a_j
    < 2^{j-2} - 0
    = 2^{j-2} \enspace,
\]
$a_j = 2^{j-2} - a_{j-1} \equiv 0 - 1 \equiv 1 \pmod{2}$, and
\[
    a_j{}^2 = (2^{j-2} - a_{j-1})^2
    = 2^{2j-4} - 2^{j-1} a_{j-1} + a_{j-1}{}^2
    \equiv 0 - 2^{j-1} \cdot 1 + (p - 2^{j-1})
    = p - 2^j
    \equiv p \pmod{2^j} \enspace.
\]
Hence, in any case, the first claim holds for the case of $j$.
Therefore it follows recursively that the first claim holds for every $j = 3,4,\dots,\mu + 1$.

Put $a := a_{\mu + 1}$.
Then the result above shows that $a < 2^{\mu - 1}$ and
\[
    a^2 < 2^{2\mu - 2}
    \leq 2^{2 \cdot ( (1/2) \log_2 p + 1 ) - 2}
    = 2^{\log_2 p}
    = p \enspace.
\]
Therefore $|a| < \sqrt{p}$ and $a^2 - p \equiv 0 \pmod{2^{\mu+1}}$, which attains $e'(p;a) = \nu_2( a^2 - p ) \geq \mu + 1$.
Now if $e'(p;a) = e'_{\max}$, then we are in the second case $b_p \leftarrow a_{\mu + 1} = a$ for the definition of $b_p$ in the algorithm (since otherwise the choice of $t_0 := 2^{\mu} - a$ would attain $e'(p;t_0) = \nu_2( (2^{\mu} - a)^2 - p ) > \nu_2( a^2 - p ) = e'(p;a)$, a contradiction).
Hence we have $1 \leq |2b_p| = |2a| < 2 \sqrt{p}$ and $e'(p;b_p) = e'(p;a) = e'_{\max} \geq \mu + 1$, therefore
\[
    e(p;2b_p) = e'(p;b_p) + 2
    = e'_{\max} + 2
    = e_{\max}
    \geq \mu + 3 \enspace.
\]
Hence the claim holds in this case.

We consider the remaining case where $e'_{\max} > e'(p;a) \geq \mu + 1$.
Write $e'_{\max} = e'(p;c)$ with $1 \leq |c| < \sqrt{p}$.
We have $e'(p;c) = \nu_2( c^2 - p ) \geq e'(p;a) + 1 \geq \mu + 2$, therefore
\[
    c^2 \equiv p \pmod{2^{\mu+2}} \enspace,
\]
while we have
\[
    |c| < \sqrt{p}
    \leq 2^{(1/2) \log_2 p}
    \leq 2^{\mu} \enspace.
\]

To show that $a^2 \not\equiv p \pmod{2^{\mu+2}}$, assume for the contrary that $a^2 \equiv p \pmod{2^{\mu+2}}$.
Then we have
\[
    (c - a)(c + a) = c^2 - a^2
    \equiv 0 \pmod{2^{\mu+2}} \enspace,
\]
therefore
\[
    \nu_2( c - a ) + \nu_2( c + a )
    = \nu_2( (c - a)(c + a) )
    \geq \mu+2 \enspace.
\]
Now since $(c + a) - (c - a) = 2a \equiv 2 \pmod{4}$, we have either $\nu_2(c + a) \leq 1$ or $\nu_2(c - a) \leq 1$.
Therefore $\nu_2( c - a ) \geq \mu+1$ or $\nu_2( c + a ) \geq \mu+1$.
Now take the $\varepsilon \in \{\pm 1\}$ for which $c$ and $\varepsilon a$ have the same sign.
Then we have
\[
    0 < |c + \varepsilon a|
    = |c| + |a|
    < 2^{\mu} + 2^{\mu-1}
    < 2^{\mu+1} \enspace,
\]
therefore $c + \varepsilon a \not\equiv 0 \pmod{2^{\mu+1}}$ and $\nu_2(c + \varepsilon a) < \mu + 1$.
This implies that $\nu_2(c - \varepsilon a) \geq \mu + 1$, while
\[
    |c - \varepsilon a| \leq 
    \max\{ |c|, |a| \}
    < \max\{ 2^{\mu}, 2^{\mu-1} \}
    = 2^{\mu} \enspace.
\]
By combining these two properties, we have $c - \varepsilon a = 0$ and $c = \varepsilon a$.
However, now $c^2 = a^2$ and
\[
    \nu_2( a^2 - p ) = \nu_2( c^2 - p )
    = e'(p;c)
    > e'(p;a)
    = \nu_2( a^2 - p ) \enspace,
\]
a contradiction.
Hence we have $a^2 \not\equiv p \pmod{2^{\mu+2}}$.

Now we have $c^2 \equiv p \equiv a^2 \pmod{2^{\mu+1}}$ and $c^2 \equiv p \not\equiv a^2 \pmod{2^{\mu+2}}$, therefore
\[
    c^2 - a^2 \equiv 0 \pmod{2^{\mu+1}}
    \quad \mbox{and} \quad
    c^2 - a^2 \not\equiv 0 \pmod{2^{\mu+2}} \enspace,
\]
which implies that $(c - a)(c + a) = c^2 - a^2 \equiv 2^{\mu+1} \pmod{2^{\mu+2}}$ and hence
\[
    \nu_2( c - a ) + \nu_2( c + a )
    = \nu_2( (c - a)(c + a) )
    = \mu+1 \enspace.
\]
Now since $(c + a) - (c - a) = 2a \equiv 2 \pmod{4}$, we have $\nu_2(c + \varepsilon a) \leq 1$ for some $\varepsilon \in \{\pm 1\}$.
Moreover, the relation $c^2 \equiv p \pmod{2^{\mu+2}}$ implies that $c \equiv 1 \equiv a \pmod{2}$ and hence $c + \varepsilon a \equiv 0 \pmod{2}$ and $\nu_2(c + \varepsilon a) \geq 1$.
This implies that $\nu_2(c + \varepsilon a) = 1$, therefore $\nu_2(c - \varepsilon a) = (\mu + 1) - 1 = \mu$.
Hence we have $c - \varepsilon a \equiv 2^{\mu} \pmod{2^{\mu+1}}$ and $c \equiv 2^{\mu} + \varepsilon a \pmod{2^{\mu+1}}$.
Now:
\begin{itemize}
    \item 
    If $\varepsilon = 1$, then $2^{\mu} < 2^{\mu} + \varepsilon a < 2^{\mu} + 2^{\mu-1} < 2^{\mu+1}$.
    Since $|c| < 2^{\mu}$, $c$ must be $2^{\mu} + \varepsilon a - 2^{\mu+1} = a - 2^{\mu}$.
    \item
    If $\varepsilon = -1$, then $0 < 2^{\mu} + \varepsilon a < 2^{\mu}$.
    Since $|c| < 2^{\mu}$, $c$ must be $2^{\mu} + \varepsilon a = 2^{\mu} - a$.
\end{itemize}
In any case, we have $|c| = 2^{\mu} - a$, therefore $(2^{\mu} - a)^2 = c^2 < p$ and
\[
    \nu_2( p - (2^{\mu} - a)^2 ) = \nu_2( p - c^2 )
    = e'(p;c)
    > e'(p;a)
    = \nu_2( p - a^2 ) \enspace.
\]
Hence we are in the first case $b_p \leftarrow 2^{\mu} - a_{\mu+1} = 2^{\mu} - a = |c|$ for the definition of $b_p$ in the algorithm.
Now we have $1 \leq |2 b_p| = |2c| < 2 \sqrt{p}$ and $e'(p;b_p) = e'(p;|c|) = e'(p;c) = e'_{\max} \geq \mu + 2$, therefore
\[
    e(p;2b_p) = e'(p;b_p) + 2
    = e'_{\max} + 2
    = e_{\max}
    \geq \mu + 4 \enspace.
\]
Hence the claim holds in this case as well.
This completes the proof.
\end{proof}

We note that how the bound $h \leq h_1$ in Theorem \ref{thm:bound:Fp} for $p \equiv 1 \pmod{8}$ is better than a bound deduced from the fact $h = h(2;E/\mathbb{F}_p) \leq h(2;E/\mathbb{F}_{p^2})$ combined with Theorem \ref{thm:bound:Fp-square} depends on the value of $p$.
In the worst case, the value of $e_{\max} = e(p;2b_p)$ may be close to $\log_2 p$; for example, when $p = 2^{2^k} + 1$ is a Fermat prime with $k \geq 2$ (hence $p \equiv 1 \pmod{8}$), we have $e(p;2) = \nu_2( 4(p - 1) ) = 2^k + 2 = \lfloor \log_2 p \rfloor + 2$.
In such a case, the bound $h \leq h_1$ for $h = h(2;E/\mathbb{F}_p)$ given by Theorem \ref{thm:bound:Fp} (where $p \equiv 1 \pmod{8}$) has no significant advantage compared to the bound $h = h(2;E/\mathbb{F}_p) \leq h(2;E/\mathbb{F}_{p^2}) \leq h_2$ where $h_2$ is as in Theorem \ref{thm:bound:Fp-square}.
In contrast, if the value of $e_{\max} = e(p;2b_p)$ is close to the lower bound $e_{\max} \geq \mu+3$, then we have
\[
    h \leq h_1
    \approx \frac{ \mu+3 }{ 2 }
    \approx \frac{ (1/2) \log_2 p}{ 2 }
    \approx \frac{ 1 }{ 4 } \log_2 p
\]
and the bound is close to a half of the bound $h \leq h_2 \approx (1/2) \log_2 p$ given through Theorem \ref{thm:bound:Fp-square}.

\section{Computational Results}
\subsection{Average Bound of Height in $q = p$}

We investigate the heights $h = h(2;E/\mathbb{F}_p)$ of $2$-volcano graphs appearing as connected components (containing ordinary curves $E$) of the $2$-isogeny graphs $G_2(\mathbb{F}_p)$ defined over $\mathbb{F}_p$.
Table \ref{height} shows the average of the upper bounds $h_1$ (Theorem \ref{thm:bound:Fp}) of the heights $h$ for $100$ randomly generated primes $p$ with $p \equiv 1 \pmod{8}$, where $b$ denotes the bit length of $p$.
For the sake of comparison, we also include the obvious upper bounds $h \leq h_2 = \lfloor (\lfloor \log_2 p \rfloor / 2) \rfloor + 2 = \lfloor (b-1)/2 \rfloor + 2$ obtained through Theorem \ref{thm:bound:Fp-square}.

\begin{table}[htbp]
\begin{center}
\caption{Average of upper bounds $h_1$ for heights $h = h(2;E/\mathbb{F}_p)$ of $2$-volcano graphs defined over $\mathbb{F}_p$ for $100$ primes $p$ with $p \equiv 1 \pmod{8}$; here $b$ denotes the bit length of $p$, and $h_2$ denotes the obvious upper bound $\lfloor (\lfloor \log_2 p \rfloor / 2) \rfloor + 2 = \lfloor (b-1)/2 \rfloor + 2$ obtained through Theorem \ref{thm:bound:Fp-square}}\label{height}
\begin{tabular}{|c|c|c|} \hline
$b$ & Average of Bounds $h_1$ & Obvious Bound $h_2$   \\ \hline
$64$ & $18.12$ & $33$   \\ \hline
$128$ & $34.20$ & $65$  \\ \hline
$192$ & $50.21$ & $97$  \\ \hline
$256$ & $66.17$ & $129$ \\ \hline
$320$ & $82.22$ & $161$  \\ \hline
$384$ & $97.98$ & $193$  \\ \hline
$448$ & $114.25$ & $225$  \\ \hline
$512$ & $130.18$ & $257$  \\ \hline
$576$ & $146.23$ & $289$  \\ \hline
$640$ & $162.16$ & $321$  \\ \hline
$704$ & $178.24$ & $353$  \\ \hline
$768$ & $194.10$ & $385$ \\ \hline
$832$ & $210.17$ & $417$ \\ \hline
$896$ & $225.98$ & $449$  \\ \hline
$960$ & $242.13$ & $481$  \\ \hline
$1024$ & $258.05$ & $513$  \\ \hline
\end{tabular}
\end{center}
\end{table}

\subsection{Computational Time in Supersingularity Testing}
We briefly explain a deterministic supersingularity testing algorithm using an $2$-isogeny graph \cite{Sut12c}.
Firstly, we explain the following property on which the algorithm in \cite{Sut12c} is based.

\begin{prop}
If $E / \mathbb{F}_q$ is a supersingular curve, then 
$j(E) \in \mathbb{F}_{p^2}$.
\end{prop}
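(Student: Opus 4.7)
The plan is to prove the stronger statement that any supersingular elliptic curve $E$, viewed over $\bar{\mathbb{F}}_p$, is isomorphic to its $p^2$-th Frobenius twist $E^{(p^2)}$; since $j(E^{(p^2)}) = j(E)^{p^2}$, this immediately yields $j(E)^{p^2} = j(E)$, i.e.\ $j(E) \in \mathbb{F}_{p^2}$.

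To set up the argument, I would use the supersingularity hypothesis in the form that the multiplication-by-$p$ map $[p] \colon E \to E$ is purely inseparable (equivalently, $E[p](\bar{\mathbb{F}}_p) = 0$). Factoring $[p] = \hat{\phi}_p \circ \phi_p$, where $\phi_p \colon E \to E^{(p)}$ is the $p$-power Frobenius isogeny and $\hat{\phi}_p \colon E^{(p)} \to E$ its dual (each of degree $p$), the fact that $\phi_p$ is always purely inseparable together with pure inseparability of $[p]$ forces $\hat{\phi}_p$ to be purely inseparable as well; otherwise $[p]$ would inherit a nontrivial separable factor from $\hat{\phi}_p$.

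Next, I would invoke the standard structural fact that in characteristic $p$ every isogeny of elliptic curves decomposes uniquely as a separable isogeny composed with a power of Frobenius. In particular, a purely inseparable isogeny of degree $p$ with source $E^{(p)}$ must agree, up to post-composition with an isomorphism, with the Frobenius $E^{(p)} \to (E^{(p)})^{(p)} = E^{(p^2)}$. Applying this to $\hat{\phi}_p$ produces an isomorphism $E^{(p^2)} \cong E$, from which $j(E) = j(E^{(p^2)}) = j(E)^{p^2}$ and hence $j(E) \in \mathbb{F}_{p^2}$.

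The main obstacle is the identification step above, namely that in the supersingular case $\hat{\phi}_p$ is itself essentially a Frobenius. This comes down to showing that $\Ker \hat{\phi}_p$, a group scheme of order $p$ whose underlying set of $\bar{\mathbb{F}}_p$-points is trivial (since $E[p](\bar{\mathbb{F}}_p) = 0$), can only be the kernel of the Frobenius of $E^{(p)}$; once this is accepted via the classification of connected group schemes of order $p$ in characteristic $p$, the remaining manipulations (the duality identity $\hat{\phi} \circ \phi = [\deg \phi]$ and the behavior of $j$ under Frobenius twist) are purely formal.
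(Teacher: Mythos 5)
Your argument is correct: it is the classical proof (as in Silverman, \emph{The Arithmetic of Elliptic Curves}, Thm.~V.3.1) that supersingularity forces $[p]=\hat{\phi}_p\circ\phi_p$ to be purely inseparable, hence $\hat{\phi}_p$ factors as an isomorphism after the Frobenius $E^{(p)}\to E^{(p^2)}$, giving $E\cong E^{(p^2)}$ and $j(E)^{p^2}=j(E)$. The paper states this proposition without proof as a known fact, so there is nothing to compare against; your write-up supplies the standard argument, and the only cosmetic caveat is that your closing aside about classifying connected group schemes of order $p$ is unnecessary (and slightly imprecise, since both $\mu_p$ and $\alpha_p$ are connected of order $p$) given that the separable-times-Frobenius factorization you already invoke settles the identification of $\hat{\phi}_p$.
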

In contrast, when $E$ is an ordinary curve, we may have $j(E) \not\in \mathbb{F}_{p^2}$.
Accordingly, the basic strategy of the algorithm in \cite{Sut12c} is to search (by utilizing the structure of $2$-volcano graphs) for a vertex $E'$ in the $2$-isogeny graph with $j(E) \not\in \mathbb{F}_{p^2}$; $E$ is ordinary if such a curve $E'$ is found, while $E$ is supersingular if such a curve $E'$ is not found.

The algorithm in \cite{Sut12c} determines supersingularity of an elliptic curve $E$ as follows.
\begin{enumerate}
\item We compute $3$ outgoing edges from the $j$-invariant of a given elliptic curve $E / \mathbb{F}_q$ on the $2$-isogeny graph $G_2(E / \mathbb{F}_q)$ 
to get next $3$ vertices $E_1,E_2,E_3$.
\item We iteratively compute $3$ paths $P_1,P_2,P_3$ 
without backtracking in parallel, where the first edge of $P_i$ is $E \to E_i$. Then, we determine supersingularity of $E$ by computing edges ($2$-isogenies) 
in $\lfloor \log_2 p \rfloor + 1$ steps as follows.
\begin{enumerate}
\item We determine the given elliptic curve $E$ as ordinary one if a vertex $E'$ satisfying $j(E') \not\in \mathbb{F}_{p^2}$ appears during the computation.
\item Otherwise, we 
determine the given elliptic curve $E$ as supersingular one. 
\end{enumerate}
\end{enumerate}
The original algorithm uses 
a classical modular polynomial to compute $2$-isogenies.
On the other hand, the supersingularity testing algorithm in \cite{HN21} reduces 
the computational cost by about half compared to the algorithm in \cite{Sut12c}, by using some property of Legendre curves instead of modular polynomials. 
Now note that the number $\lfloor \log_2 p \rfloor + 1$ of steps in the algorithm originates from the known upper bound $h \leq \log_2(\sqrt{4q}) = \log_2 p + 1$ of the height $h$ of the $2$-isogeny graph for the ordinary case.
Therefore, the number of required steps is reduced once we replace the upper bound of $h$ with the bound $h \leq h_2$ given by Theorem \ref{thm:bound:Fp-square}.

We investigate the performance of the algorithm in \cite{HN21} for new bound of their heights.
We denote by $b$ the bit-length of $p$. 
For $100$ prime numbers of $b$-bit length, we randomly selected  a supersingular curve for each prime (here we used supersingular curves only, because they correspond to the worst case in computation time of the algorithm).
We computed the algorithm's performance separately for primes where 
$p \equiv 1 \pmod{4}$ and $p \equiv 3 \pmod{4}$ by using Magma.
The environment for the experiment is: Ubuntu 20.04.5 LTS, Intel Core i7-11700K @ 3.60GHz (16 cores), 128GB memory, Magma v2.26-11.
Table \ref{time1} shows that the computational time of the supersingularity testing algorithm is reduced by about half, which is consistent to the improvement of the upper bound for the heights by about half. 

\begin{table}[htbp]
\begin{center}
\caption{Average execution times of a superingularity testing algorithm in \cite{HN21} for the known bound $h_0$ and our improved bound $h_2$ for the heights, 
where $h_0 = 
\left\lfloor \log_2 p \right\rfloor + 1, h_2 = \left\lfloor \frac{ 1 }{ 2 } \lfloor \log_2 p \rfloor \right\rfloor + 2$ and $b$ denotes the bit length of $p = 4r + 1$ or $4r + 3, r \in \mathbb{Z}$ (CPU times in milliseconds)}\label{time1}
\begin{tabular}{|c|c|c|c|c|} \hline
$b$ & $h_0 
$ ($p = 4r + 1$) & $h_2$ ($p = 4r + 1$) & $h_0 
$ ($p = 4r + 3$) & $h_2$ ($p = 4r + 3$)  \\ \hline
$64$ & $18$ & $10$ & $12$ & $8$   \\ \hline
$128$ & $66$ & $37$ & $51$ & $27$   \\ \hline
$192$ & $158$ & $84$ & $126$ & $67$   \\ \hline
$256$ & $315$  & $165$ & $249$  & $132$ \\ \hline
$320$ & $537$ & $278$ & $430$ & $225$   \\ \hline
$384$ & $880$ & $453$ & $694$ & $359$   \\ \hline
$448$ & $1361$ & $698$ & $1072$ & $554$   \\ \hline
$512$ & $2016$ & $1032$ & $1576$ & $807$  \\ \hline
$576$ & $2897$ & $1475$ & $2230$ & $1142$  \\ \hline
$640$ & $3822$ & $1947$ & $3006$ & $1538$   \\ \hline
$704$ & $5157$ & $2627$ & $4012$ & $2044$   \\ \hline
$768$ & $6716$ & $3414$ & $5275$ & $2682$  \\ \hline
$832$ & $8738$ & $4427$ & $6734$ & $3428$  \\ \hline
$896$ & $11229$ & $5694$ & $8594$ & $4365$   \\ \hline
$960$ & $13879$ & $7016$ & $10720$ & $5438$   \\ \hline
$1024$ & $17144$ & $8679$ & $13121$ & $6639$   \\ \hline
\end{tabular}
\end{center}
\end{table}

\paragraph*{Acknowledgements.}
This work was supported by JSPS KAKENHI Grant Numbers JP22K11906 and JP24K17281, Japan.
This work was supported by Institute of Mathematics for Industry, Joint Usage/Research Center in Kyushu University. (FY2024 Short-term Visiting Researcher \lq\lq Towards improving security reductions in isogeny-based cryptosystems\rq\rq{} (2024a026).)

\bibliographystyle{abbrv}
\bibliography{vol}

\end{document}